\newcommand{\im}{\operatorname{Im}}
\begin{document}
\newtheorem{theorem}{Theorem}[section]
\newtheorem{lemma}[theorem]{Lemma}
\newtheorem{definition}[theorem]{Definition}
\newtheorem{claim}[theorem]{Claim}
\newtheorem{example}[theorem]{Example}
\newtheorem{remark}[theorem]{Remark}
\newtheorem{proposition}[theorem]{Proposition}
\newtheorem{corollary}[theorem]{Corollary}
\newtheorem{observation}[theorem]{Observation}
\newcommand{\subscript}[2]{$#1 _ #2$}
\newtheorem*{theorem*}{Theorem}

\author{Izhar Oppenheim}
\address{Department of Mathematics, Ben-Gurion University of the Negev, Be'er Sheva 84105, Israel} 
\email{izharo@bgu.ac.il}
\title[Angle criteria for uniform convergence]{Angle criteria for uniform convergence of averaged projections and cyclic or random products of projections}
\maketitle
\begin{abstract}
We apply a new notion of angle between projections to deduce criteria for uniform convergence results of the alternating projections method under several different settings: averaged projections, cyclic products, quasi-periodic products and random products.
\end{abstract}
\textbf{Mathematics Subject Classification (2010)}. Primary 46B28, Secondary 41A65, 46A32. \\
%\textbf{Keywords}. .

\section{Introduction}

Given a set of orthogonal projections $\lbrace P_1,...,P_n \rbrace$ in a Hilbert space $\mathcal{H}$ there is a well-known iterative method called the alternating projections method for approximating the orthogonal projection of some $v \in \mathcal{H}$ on $\im (P_1) \cap ... \cap \im (P_n)$. There are several variations of this method, but perhaps its simplest form is to consider cyclic products of $P_1,...,P_n$, i.e., to look at the sequence  $(P_n ... P_1)^i$. In \cite{Halperin}, Halperin showed that $(P_n ... P_1)^i$ converges to the projection on $\im (P_1) \cap ... \cap \im (P_n)$ in the strong operator topology, i.e, he showed that for every $v \in \mathcal{H}$ 
$$\lim_{i \rightarrow \infty} \Vert (P_n ... P_1)^i v - P_{1,...,n} v \Vert = 0,$$
where $P_{1,...,n}$ is the orthogonal projection on $\im (P_1) \cap ... \cap \im (P_n)$ (when $n=2$ this result is due to von Neumann \cite{vNeumann}). 

Extending Halperin's result, one can consider non-cyclic sequences of the projections $\lbrace P_1,...,P_n \rbrace$. This can be described as follows - take $\tau \in \lbrace 1,...,n \rbrace^{\mathbb{N}}$ and consider the sequence $...P_{\tau (i)} ... P_{\tau (1)}$. Sakai \cite{S} showed that if $\tau$ is quasi-periodic (see definition below) then the result of Halperin holds. Amemiya and And{\^o} \cite{AA} showed that if for every $1 \leq k \leq n$, the set $\lbrace j : \tau (j) = k \rbrace$ is infinite, then $...P_{\tau (i)} ... P_{\tau (1)}$ converges to $P_{1,...,n}$ in the weak-operator topology. This result was generalized by several authors, who gave sufficient conditions for stronger forms of convergence in the Hilbert or Banach settings (see \cite{Dye}, \cite{DR}, \cite{DR2}, \cite{DKR}, \cite{DKLR}). On the other hand, it was shown by Kopeck{\'a} and M{\"u}ller in \cite{KM} (improving on a similar result of Paszkiewicz \cite{Pas}) that in general even in the case of $3$ orthogonal projections, $...P_{\tau (i)} ... P_{\tau (1)}$ need not converge in the strong operator norm (this result was later strengthened in \cite{KP}).

Another of the method approximating the orthogonal projection on $\im (P_1) \cap ... \cap \im (P_n)$ is averaged projections. In this methods, one fixes positive constants $\alpha_k >0, k=1,...,n$ such that $\sum_k \alpha_k =1$ and considers the convergence of $\left( \sum_{k} \alpha_k P_k \right)^i$ as $i$ tends to infinity. In \cite{Lapidus}, Lapidus showed that this method converges to a projection on $\im (P_1) \cap ... \cap \im (P_n)$ in the strong operator topology. This result was extended to various types Banach spaces (such as uniformly convex spaces) in \cite{Reich} and \cite{BL} (under the assumption that all the projections are of norm $1$).

In this paper, we will give sufficient criteria for uniform convergence in the case of averaged projections and in the cases of cyclic, quasi-periodic and random products for projections on Banach spaces - see Theorem \ref{averaged projection theorem}, Proposition \ref{prop cyclic}, Theorem \ref{thm quasi-periodic} and Theorem \ref{thm random} below. Our criteria uses a new notion of angles between projections that we originally introduced in \cite{ORobust} (and later developed in \cite{OBanachCoho}) in order to study representations of groups on Banach spaces. The basic idea behind all our criteria is that if the angle between any two projections is large enough (with respect to other parameters, such the norms of the projections) then uniform convergence is achieved.
 
Since various authors used conditions regarding angles in the Hilbert setting to deduce similar uniform convergence results (see for instance \cite{BadeaGrivauxM}, \cite{PRZ} and \cite{PRZ2}), we found it necessary to point out the new ideas in this paper:
\begin{itemize}
\item Unlike previous work, we generalize the notion of angles to a notion of angle between projections in Banach setting. Two things are new here - the notion of angles between projections (as opposed to angle between subspaces) and the generalization of this notion of an angle to Banach spaces. Let us remark that while there are other definitions of angles between subspaces in Banach spaces (see for instance \cite{Ostrovski}[section 3]), as far as we know, these definitions are not applicable to ensure convergence of the alternating projections or the average projections methods. 
\item Our work does not require the projections to be of norm $1$ (although the norm should be sufficiently close to $1$).
\item Our criteria refer only to angles between couples of projections (i.e., we ask that for each $i,j$, the angle between $P_i$ and $P_j$ will be large), while previous articles defined angles between the $n$-tuple of subspaces. This point was important in applying our results in concrete examples. Namely, in \cite{ORobust} and \cite{OBanachCoho} we had concrete examples of projection (related to examples of groups) such that the angles between projection were effectively bounded for several classes of Banach spaces (e.g., in $L^p$ spaces).
\end{itemize} 

\textbf{Acknowledgements.} I would like to thank Piotr Nowak for pointing out a mistake in an early draft of this paper and Simeon Reich for his many useful suggestions.

\section{Angle between projections - definition and uniform convergence of averaged projections}  
Let $X$ be a Banach space. Recall that a projection $P$ is a bounded operator $P \in \mathcal{B} (X)$ such $P^2 =P$.  Note that $\Vert P \Vert \geq 1$ if $P \neq 0$. 

%Given a family of projections $P_1,...,P_n$ there is a well known algorithm of finding a projection on $\cap_{j=1}^n \im (P_j)$, which is known as the method of averaged projections. The idea is to define the operator $T= \frac{P_1 +...+P_n}{n}$ and to take a limit $T^i$ as $i$ goes to infinity. The reader should note that in general $T^i$ need not converge in the operator norm. In \cite{ORobust}, the author had established a criterion for the convergence of $T^i$ using the idea of an angle between projections.

\begin{definition}[Angle between projections]
\label{angle between projections definition}
Let $X$ be a Banach space and let $P_1, P_2$ be projections. Assume that there is a projection $P_{1,2}$ on $\im (P_1) \cap \im (P_2)$ such that $P_{1,2} P_1 = P_{1,2}$ and $P_{1,2} P_2 = P_{1,2}$ and define 
$$\cos (\angle (P_1,P_2)) = \max \left\lbrace \Vert P_1 (P_2 - P_{1,2} ) \Vert, \Vert P_2 (P_1 - P_{1,2} ) \Vert  \right\rbrace.$$ 
\end{definition}

A few remarks regarding this definition are in order:

\begin{remark}
In the above definition, we are actually defining the ``cosine'' of the angle. This is a little misleading, because it could be that in some cases  $\cos (\angle (P_1,P_2)) > 1$. 
\end{remark}

\begin{remark}
We note that in the case where $X$ is a Hilbert space and $P_1,P_2$ are orthogonal projections on $V_1,V_2$, the orthogonal projection $P_{1,2}$ on  $V_1 \cap V_2$ will always fulfill $P_{1,2} P_1 = P_{1,2}$ and $P_{1,2} P_2 = P_{1,2}$. Also, in this case, $\cos (\angle (P_1,P_2))$ will be equal to the Friedrichs angle between $V_1$ and $V_2$ defined by
$$\cos (\angle (V_1,V_2))= \sup \lbrace \vert \langle u,v \rangle \vert : \Vert u \Vert \leq 1, \Vert v \Vert \leq 1, u \in V_1 \cap (V_1 \cap V_2)^\perp, v \in V_2 \rbrace.$$
\end{remark} 

\begin{remark}
The conditions $P_{1,2} P_1 = P_{1,2}$ and $P_{1,2} P_2 = P_{1,2}$ in the above definition can be understood as follows - in order to define the Friedrichs angle between two subspaces in a Hilbert space, we needed to quotient out their intersection. In Banach spaces, taking a quotient is a non linear operation in the sense that there is no canonical linear projection on the quotient. The condition $P_{1,2} P_1 = P_{1,2}$ replaces the notion of quotient by providing a linear projection $P_1-P_{1,2}$ on the subspace of vectors fixed by $P_1$ but not fixed by $P_{1,2}$ (except for the $0$-vector which is of course fixed by both). The condition $P_{1,2} P_2 = P_{1,2}$ serves the same purpose making $P_2-P_{1,2}$ into a similar projection.
\end{remark}

\begin{remark}
The above definition of angle might seem hard to compute or even bound. However, the reader can find a multitude of examples in \cite{ORobust} and \cite{OBanachCoho} in which such bounds are computed in a large class of Banach spaces. The main theme of these bounds is that one can use bounds of the Friedrichs angle between two orthogonal projections in a Hilbert space in order to give bounds for the cosine of the angle between ``similar'' projections in Banach spaces, given that the Banach spaces are ``close enough'' to some Hilbert space. 
\end{remark}

Next, we'll prove an ``angle criterion'' for uniform convergence of the method of averaged projections. A basic version of this result has already appeared in \cite{ORobust}. Namely, in \cite{ORobust}[Theorem 3.12], we gave an ``angle criterion'' for uniform convergence of $( \frac{P_1+...+P_n}{n} )^i$ to a projection on $\im (P_1) \cap ... \cap \im (P_n)$, given that the angle between every couple $P_i$ and $P_j$ is small enough and that the norm of all the $P_i$ is sufficiently close to $1$. Below, we'll deal with the more general case of the convergence of any convex combination of $P_1,...,P_n$.  We note that the angle condition we derive below is stronger than \cite{ORobust}[Theorem 3.12] even in the simplified case of $( \frac{P_1+...+P_n}{n} )^i$, since it allows interplay between different angles, i.e., not all the angles have to be large (a few small angles can be compensated by many large angles). 

\begin{theorem}
\label{averaged projection theorem}
Let $X$ be a Banach space and let $P_1,...,P_n$ be projections in $X$ ($n \geq 2$). Assume that for every $1 \leq j_1 < j_2 \leq n$, there is a projection $P_{j_1,j_2}$ on $\im (P_{j_1}) \cap \im (P_{j_2})$, such that $P_{j_1,j_2} P_{j_1} = P_{j_1,j_2}, P_{j_1,j_2} P_{j_2} = P_{j_1,j_2}$. Let $\alpha_1,...,\alpha_n$ be positive constants such that $\sum_k \alpha_k =1$. Denote $T = \sum_k \alpha_k P_k$. Assume there is a constant 
$$\beta < \min \left\lbrace \dfrac{1}{1-\alpha_1},..., \dfrac{1}{1-\alpha_n} \right\rbrace,$$
such that 
$$ \max \lbrace \Vert P_1 \Vert,..., \Vert P_n \Vert \rbrace \leq \beta .$$
Assume further that for every $1 \leq j_1, j_2 \leq n$, $\cos (\angle (P_{j_1},P_{j_2})) <1$ and that 
$$r=\min_{1 \leq j \leq n}   \left( (1-\alpha_j)\beta +\sum_{k=1, k \neq j}^n \left(\dfrac{(\cos (\angle (P_j,P_k))+ \beta+\beta^2) \cos (\angle (P_j,P_k))}{1-\cos (\angle (P_j,P_k))} 2 \alpha_k \right) \right) <1.$$
Then there is a projection $T^\infty$ on $\bigcap_{j=1}^n \im (P_j)$, such that for every $i$, 
$$\Vert T^i - T^\infty \Vert \leq Cr^i$$ 
with $C =\frac{1+\beta}{1-r}$.
\end{theorem}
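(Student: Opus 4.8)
The plan is to reduce the theorem to a single one-step contraction estimate on the $T$-invariant subspace $\im(I-T)$ and then to prove that estimate pivot-by-pivot, the minimum over pivots being precisely what yields the rate $r=\min_j r_j$. First I would record the elementary bounds $\Vert T \Vert \le \sum_k \alpha_k \Vert P_k\Vert \le \beta$ and $\Vert I-T\Vert \le 1+\beta$, and note that $\im(I-T)$ is invariant under $T$ because $T$ commutes with $I-T$. The core claim to isolate is
$$\Vert Tu \Vert \le r\,\Vert u \Vert \qquad \text{for every } u \in \im(I-T).$$
Granting this, I iterate it on the invariant subspace to get $\Vert T^{m}(I-T)\Vert \le r^{m}\Vert I-T\Vert \le (1+\beta)r^{m}$, and then sum the telescoping identity $T^i - T^\infty = \sum_{m \ge i} T^{m}(I-T)$ (writing $T^\infty$ for the limit) to conclude that $(T^i)$ is Cauchy in operator norm, that $T^\infty$ exists, and that $\Vert T^i - T^\infty\Vert \le (1+\beta)\sum_{m\ge i} r^m = \frac{1+\beta}{1-r}\,r^i = Cr^i$. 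This reproduces exactly the constant in the statement, so the whole theorem hinges on the displayed one-step bound.

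Second, I would prove the one-step bound one pivot at a time: for each fixed index $j$ I aim to show $\Vert Tu\Vert \le r_j\Vert u\Vert$, where $r_j$ is the $j$-th term in the minimum defining $r$; taking the smallest such bound over $j$ then yields $\Vert Tu\Vert \le (\min_j r_j)\Vert u\Vert = r\Vert u\Vert$, which is where the $\min$ (rather than a symmetric expression) comes from. For a fixed pivot $j$ I split $T = \alpha_j P_j + \sum_{k \ne j}\alpha_k P_k$ and write $u=(I-T)w = \sum_k \alpha_k (I-P_k)w$, using that each vector $(I-P_k)w$ is annihilated by $P_k$ and, since $P_{j,k}P_k = P_{j,k}$, also by $P_{j,k}$. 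The ``diagonal'' part contributes the summand $(1-\alpha_j)\beta = \sum_{k\ne j}\alpha_k \beta$ via $\Vert P_k\Vert\le\beta$. Each interaction between the pivot $P_j$ and another $P_k$ is controlled through the angle: I combine $\Vert P_j(P_k - P_{j,k})\Vert \le \cos(\angle(P_j,P_k))$ and $\Vert P_k(P_j - P_{j,k})\Vert \le \cos(\angle(P_j,P_k))$ with $\Vert P_j\Vert,\Vert P_k\Vert \le \beta$, recovering control of $u$ from its image by inverting $I - P_j(P_k-P_{j,k})$ via a Neumann series; this series converges precisely because $\cos(\angle(P_j,P_k))<1$, and it is the source of the factor $\frac{1}{1-\cos(\angle(P_j,P_k))}$. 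Assembling these with the prefactor $\cos(\angle(P_j,P_k))+\beta+\beta^2$ and weight $2\alpha_k$ produces the pairwise summands of $r_j$. The hypothesis $\beta < \min_k 1/(1-\alpha_k)$ enters exactly here: it guarantees $(1-\alpha_j)\beta<1$, so that $r_j$, and hence $r$, can drop below $1$ once the angles are small.

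Finally, I would identify $T^\infty$ as a projection onto $\bigcap_j \im(P_j)$. Since $T^\infty=\lim_i T^i$ in norm it is idempotent and satisfies $TT^\infty = T^\infty T = T^\infty$; the inclusion $\bigcap_j \im(P_j)\subseteq \im(T^\infty)$ is immediate, since any $v$ in the intersection has $P_k v = v$ for all $k$, hence $Tv=v$ and $T^i v = v$. For the reverse inclusion one must show that $T$ fixes no vector outside the intersection, which is again where the strict-angle hypotheses $\cos(\angle(P_j,P_k))<1$ are used, in the spirit of \cite{ORobust}[Theorem 3.12].

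The hard part will be the per-pivot pairwise estimate of the second paragraph. The subtlety is a mismatch of ``sides'': the vectors $(I-P_k)w$ appearing in $u$ lie in the kernels $\Ker(P_k)$, whereas the angle $\cos(\angle(P_j,P_k))$ is defined through the action of $P_j$ and $P_k$ on the ranges $\im(P_\bullet - P_{j,k})$. Bridging these, extracting the exact geometric-series constant while only assuming $\Vert P_k\Vert \le \beta$ (possibly larger than $1$), and organizing the bookkeeping so that the pivot $j$ really produces $r_j$ and not a symmetric quantity, is the technical heart of the argument.
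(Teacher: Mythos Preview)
Your high-level scaffolding (one-step contraction, telescoping sum, identification of $T^\infty$) matches the paper's, but the central technical device is different and this is where your proposal has a genuine gap. The paper does \emph{not} attempt a direct norm contraction $\Vert Tu\Vert \le r\Vert u\Vert$ on $\im(I-T)$. Instead it introduces an \emph{energy function} on operators, $E(S)=\sum_{j}\alpha_j\Vert (I-P_j)S\Vert$, and proves the single inequality $E(TS)\le r\,E(S)$ for every $S\in B(X)$, via a preliminary lemma bounding $\Vert(P_jP_k-P_kP_j)S\Vert$ by $\frac{(\beta+\beta^2+\cos(\angle(P_j,P_k)))\cos(\angle(P_j,P_k))}{1-\cos(\angle(P_j,P_k))}\bigl(\Vert(I-P_j)S\Vert+\Vert(I-P_k)S\Vert\bigr)$. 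From this, $\Vert T^{i+1}-T^i\Vert=\Vert(I-T)T^i\Vert\le E(T^i)\le r^iE(I)\le r^i(1+\beta)$, and the identification $\im(T^\infty)=\bigcap_j\im(P_j)$ falls out immediately from $E(T^\infty)=E(TT^\infty)\le rE(T^\infty)$, forcing $E(T^\infty)=0$ and hence $(I-P_j)T^\infty=0$ for each $j$.

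Your proposed route runs into exactly the obstacle you flag at the end and does not get past it. Writing $u=(I-T)w=\sum_k\alpha_k(I-P_k)w$, the quantity you must bound \emph{from below} is $\Vert u\Vert=\bigl\Vert\sum_k\alpha_k(I-P_k)w\bigr\Vert$, and this can be arbitrarily small compared with $\sum_k\alpha_k\Vert(I-P_k)w\Vert$ because of cancellations; the angle hypotheses control the individual summands, not their norm after summation. The energy $E(S)$ is precisely the trick that replaces $\Vert(I-T)S\Vert$ by the larger, structurally compatible quantity $\sum_j\alpha_j\Vert(I-P_j)S\Vert$, so that the commutator lemma can be applied termwise. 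Your Neumann-series idea (``inverting $I-P_j(P_k-P_{j,k})$'') does not connect to bounding $\Vert Tu\Vert/\Vert u\Vert$ for $u\in\im(I-T)$, and the plan of proving $\Vert Tu\Vert\le r_j\Vert u\Vert$ separately for \emph{every} pivot $j$ and then taking the minimum would, if it worked, be strictly stronger than what the paper's own computation yields; that is a warning sign rather than evidence that the route succeeds. Likewise, your Step~3 for the reverse inclusion $\im(T^\infty)\subseteq\bigcap_j\im(P_j)$ is left as a gesture toward \cite{ORobust}; in the paper it is a one-line consequence of $E(T^\infty)=0$, which you do not have available.
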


\begin{remark}
Note that in the theorem above the first condition
$$\beta < \min \left\lbrace \dfrac{1}{1-\alpha_1},..., \dfrac{1}{1-\alpha_n} \right\rbrace$$
is necessary for the fulfilment of the second condition  $r<1$.
\end{remark}

In order to prove the above theorem, we will need the following lemma:
\begin{lemma}
\label{p1p2-p2p1 lemma}
Let $P_1,P_2$ be projections in a Banach space $X$, such that there is a projection $P_{1,2}$ on $\im (P_1) \cap \im (P_2)$ with $P_{1,2} P_1 = P_{1,2}, P_{1,2} P_2 = P_{1,2}$. Assume that $\cos (\angle (P_1,P_2))<1$, and that $\beta$ is a constant such that $\Vert P_1 \Vert \leq \beta$ and  $\Vert P_2 \Vert \leq \beta$.
Then for any operator $S \in B(X)$, we have that
\begin{dmath*}
\Vert (P_1 P_2 - P_2 P_1)S \Vert \leq \\
 \dfrac{(\beta + \beta^2 + \cos (\angle (P_1,P_2)))  \cos (\angle (P_1,P_2))}{1-\cos (\angle (P_1,P_2))} \left( \Vert (I-P_1)S \Vert + \Vert (I-P_2)S \Vert \right)
\end{dmath*}

\end{lemma}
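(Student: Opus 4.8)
The plan is to reduce the commutator $P_1P_2-P_2P_1$ to a commutator of the ``reduced'' projections $Q_1:=P_1-P_{1,2}$, $Q_2:=P_2-P_{1,2}$, and then to extract the error terms $(I-P_1)S$ and $(I-P_2)S$ by an iteration producing a geometric series in $c:=\cos(\angle(P_1,P_2))$. First I would record the algebraic facts. Since $\im(P_{1,2})\subseteq\im(P_1)$ one has $P_1P_{1,2}=P_{1,2}$, and together with the hypothesis $P_{1,2}P_1=P_{1,2}$ (and the analogues for $P_2$) it follows that $Q_1,Q_2$ are projections, that $Q_iP_{1,2}=P_{1,2}Q_i=0$, $P_iQ_i=Q_iP_i=Q_i$, $Q_i(I-P_i)=0$, and, crucially,
\[
Q_1Q_2=P_1P_2-P_{1,2}=P_1(P_2-P_{1,2})=(P_1-P_{1,2})P_2=Q_1P_2,
\]
together with the mirror identities for $Q_2Q_1$. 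Consequently $P_1P_2-P_2P_1=Q_1Q_2-Q_2Q_1$; by the definition of the angle $\Vert Q_1Q_2\Vert=\Vert P_1(P_2-P_{1,2})\Vert\le c$ and $\Vert Q_2Q_1\Vert\le c$; and since $Q_2^2=Q_2$ we get $Q_1Q_2Q_1=(Q_1Q_2)(Q_2Q_1)$, whence $\Vert(Q_1Q_2)^k\Vert\le c^k$ and $\Vert(Q_1Q_2)^kQ_1\Vert\le c^{k+1}$ for all $k\ge1$.

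Next I would run the iteration. Writing $a=(I-P_1)S$, $b=(I-P_2)S$, inserting $I=P_1+(I-P_1)$ after $Q_1Q_2$ (using $Q_1Q_2P_1=Q_1Q_2Q_1$), then $I=P_2+(I-P_2)$ after $Q_1Q_2Q_1$ (using $Q_1P_2=Q_1Q_2$), and repeating, a straightforward induction gives, for every $m$,
\[
Q_1Q_2S=(Q_1Q_2)^{m+1}S+\sum_{k=1}^{m}(Q_1Q_2)^k\,a+\sum_{k=1}^{m}(Q_1Q_2)^kQ_1\,b .
\]
Because $\Vert(Q_1Q_2)^{m+1}S\Vert\le c^{m+1}\Vert S\Vert\to0$ (this is where $c<1$ is essential), letting $m\to\infty$ and taking norms yields
\[
\Vert Q_1Q_2S\Vert\le\frac{c}{1-c}\Vert a\Vert+\frac{c^2}{1-c}\Vert b\Vert ,
\]
and symmetrically $\Vert Q_2Q_1S\Vert\le\frac{c}{1-c}\Vert b\Vert+\frac{c^2}{1-c}\Vert a\Vert$. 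Adding the two and using $P_1P_2-P_2P_1=Q_1Q_2-Q_2Q_1$ gives
\[
\Vert(P_1P_2-P_2P_1)S\Vert\le\frac{c(1+c)}{1-c}\bigl(\Vert(I-P_1)S\Vert+\Vert(I-P_2)S\Vert\bigr).
\]

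Finally, if some $P_i=0$ the statement is trivial, so we may assume $\Vert P_i\Vert\ge1$; then $\beta\ge1$ and $\beta+\beta^2+c\ge1+c$, so the displayed bound implies the one claimed in the lemma (indeed with a slightly sharper constant). I expect the only real work to be the bookkeeping in the iteration — tracking which error term is paired with $a$ and which with $b$, and confirming the norm bound $\Vert(Q_1Q_2)^kQ_1\Vert\le c^{k+1}$ — together with the routine verification of the handful of operator identities above; the sole hypothesis that genuinely matters is $c<1$, which both makes the geometric series converge and forces the leading term $(Q_1Q_2)^{m+1}S$ to vanish in the limit.
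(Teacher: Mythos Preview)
Your argument is correct, and in fact yields the sharper, $\beta$-free constant $\dfrac{c(1+c)}{1-c}$ in place of the paper's $\dfrac{c(\beta+\beta^2+c)}{1-c}$. The route, however, is genuinely different from the paper's. The paper also rewrites $P_1P_2-P_{1,2}=P_1(P_2-P_{1,2})^2$ to extract one factor of $c$ against $\Vert(P_1-P_{1,2})S\Vert+\Vert(P_2-P_{1,2})S\Vert$, but then bounds these two quantities by a single ``one-step'' insertion of $I=P_2+(I-P_2)$ (resp.\ $I=P_1+(I-P_1)$), producing a coupled pair of inequalities which is solved as a $2\times2$ linear system; the $\beta+\beta^2+c$ comes from the crude estimate $\Vert P_i-P_{1,2}\Vert\le\Vert P_i\Vert+\Vert P_{1,2}\Vert\le\beta+(\beta^2+c)$ applied to the leftover factor on the error term. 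You instead iterate indefinitely, never isolating a bare $Q_i$, and sum the resulting geometric series in $c$; this is why $\beta$ disappears from your constant. Both approaches rely on $c<1$ at the same point (dividing by $1-c$, equivalently summing the series), and your final comparison $1+c\le\beta+\beta^2+c$ for $\beta\ge1$ (with the degenerate case $P_i=0$ handled separately) correctly shows your bound implies the lemma as stated.
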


\begin{proof}
Let $S \in B(X)$ be some operator. We start by observing that
\begin{dmath*}
\Vert (P_1 P_2 - P_2 P_1)S \Vert \leq \\
 \Vert (P_1 P_2 - P_{1,2})S \Vert + \Vert (P_2 P_1 - P_{1,2})S \Vert = \Vert P_1 ( P_2 - P_{1,2})^2 S \Vert + \Vert P_2 ( P_1 - P_{1,2})^2 S \Vert \leq \\
  \cos (\angle (P_1,P_2)) \left( \Vert (P_1 - P_{1,2})S \Vert + \Vert (P_2 - P_{1,2})S \Vert \right).
\end{dmath*}
In order to complete the proof, we will show that
$$ \Vert (P_1 - P_{1,2})S \Vert + \Vert (P_2 - P_{1,2})S \Vert \leq \dfrac{\beta + \beta^2 + \cos (\angle (P_1,P_2))}{1-\cos (\angle (P_1,P_2))} (\Vert (I-P_1)S \Vert + \Vert (I-P_2)S \Vert).$$
First, note that by the definition of $\cos (\angle (P_1,P_2))$, we have that 
$$\Vert P_{1,2} \Vert \leq \Vert P_1 P_2 \Vert + \cos (\angle (P_1,P_2)) \leq \beta^2 +  \cos (\angle (P_1,P_2)).$$
Observe that
\begin{dmath*}
 \Vert (P_1 - P_{1,2})S \Vert \leq  \Vert (P_1 - P_{1,2}) P_2 S \Vert +  \Vert  (P_1 - P_{1,2})(I-P_2)S \Vert = \\
 \Vert (P_1 P_2 - P_{1,2}) S \Vert +  \Vert  (P_1 - P_{1,2}) (I-P_2)S \Vert = \\
 \Vert P_1 (P_2 - P_{1,2})^2 S \Vert +  \Vert  (P_1 - P_{1,2}) (I-P_2)S \Vert  \leq \\
 \cos (\angle (P_1,P_2)) \Vert  (P_2 - P_{1,2})S \Vert + (\Vert P_1 \Vert + \Vert P_{1,2} \Vert) \Vert (I-P_2)S \Vert \leq \\
 \cos (\angle (P_1,P_2)) \Vert  (P_2 - P_{1,2})S \Vert + (\beta + \beta^2 + \cos (\angle (P_1,P_2)) ) \Vert (I-P_2)S \Vert,
\end{dmath*}
where the last inequality is due to the bound on $\Vert P_{1,2} \Vert$ noted above. 

By similar computations
\begin{dmath*}
\Vert (P_2 - P_{1,2})S \Vert \leq  \\
 \cos (\angle (P_1,P_2)) \Vert  (P_1 - P_{1,2})S \Vert + (\beta + \beta^2 + \cos (\angle (P_1,P_2)) ) \Vert (I-P_1)S \Vert.
\end{dmath*}

Summing these two inequalities yields
\begin{dmath*}
 \Vert (P_1 - P_{1,2})S \Vert + \Vert (P_2 - P_{1,2})S \Vert \leq \\
 \cos (\angle (P_1,P_2)) (\Vert  (P_1 - P_{1,2})S \Vert+\Vert  (P_2 - P_{1,2})S \Vert) + \\ (\beta + \beta^2 + \cos (\angle (P_1,P_2)) ) (\Vert (I-P_1)S \Vert + \Vert (I-P_2)S \Vert).
\end{dmath*}
Therefore
$$ \Vert (P_1 - P_{1,2})S \Vert + \Vert (P_2 - P_{1,2})S \Vert \leq \dfrac{\beta + \beta^2 + \cos (\angle (P_1,P_2))}{1-\cos (\angle (P_1,P_2))} (\Vert (I-P_1)S \Vert + \Vert (I-P_2)S \Vert),$$
as needed.
\end{proof}

Next, we can prove Theorem \ref{averaged projection theorem}:
\begin{proof}
We start by defining an energy function $E: B(X) \rightarrow \mathbb{R}_{\geq 0}$ as
$$E(S) = \sum_{j=1}^n \alpha_j \Vert (I-P_j) S \Vert.$$
\textbf{Step 1:} We will show that for $T = \sum_k \alpha_k P_k$, we have for every $S \in B(X)$ that $E (TS) \leq r E(S)$, where $r$ is the constant defined as in Theorem \ref{averaged projection theorem}. Fix some $S \in B(X)$, then
\begin{dmath*}
E(TS) = \sum_{j=1}^n \alpha_j \left\Vert (I-P_j) \left( \sum_{k=1}^n \alpha_k P_k \right) S \right\Vert = 
\sum_{j=1}^n \alpha_j \left\Vert  \left( \sum_{k=1,k\neq j}^n \alpha_k (P_k-P_j P_k) \right) S \right\Vert = 
 \sum_{j=1}^n \alpha_j \left\Vert  \left( \sum_{k=1,k\neq j}^n \alpha_k (P_k-P_k P_j) + \alpha_k (P_k P_j - P_j P_k) \right) S \right\Vert = 
\sum_{j=1}^n \alpha_j \left\Vert  \left( \sum_{k=1,k\neq j}^n \alpha_k P_k (I-P_j) + \alpha_k (P_k P_j - P_j P_k) \right) S \right\Vert \leq 
{\left( \sum_{j=1}^n \alpha_j \sum_{k=1,k\neq j}^n \alpha_k \Vert P_k \Vert \Vert (I-P_j) S \Vert  \right)} +   {\left( \sum_{k,j=1, k \neq j}^n \alpha_j \alpha_k \Vert (P_k P_j - P_j P_k)  S  \Vert \right)}.
\end{dmath*}
We'll deal with each of the summands above separately. First, we notice that
\begin{dmath}
\label{first summand}
\sum_{j=1}^n \alpha_j \sum_{k=1,k\neq j}^n \alpha_k \Vert P_k \Vert \Vert (I-P_j) S \Vert  =\sum_{j=1}^n \alpha_j \Vert (I-P_j) S \Vert  \sum_{k=1,k\neq j}^n \alpha_k \beta = \\
 \sum_{j=1}^n (1-\alpha_j)\beta \alpha_j \Vert (I-P_j) S \Vert.
\end{dmath}

Second, we apply Lemma \ref{p1p2-p2p1 lemma} to the second summand
\begin{dmath}
\label{second summand}
\sum_{k,j=1, k \neq j}^n \alpha_j \alpha_k \Vert (P_k P_j - P_j P_k)  S  \Vert \leq \\ 
\sum_{k,j=1, k \neq j}^n \alpha_j \alpha_k  \dfrac{(\beta + \beta^2 + \cos (\angle (P_j,P_k)))  \cos (\angle (P_j,P_k))}{1-\cos (\angle (P_j,P_k))} {\left( \Vert (I-P_j)S \Vert + \Vert (I-P_k)S \Vert \right)} = \\
\sum_{j=1}^n \alpha_j \Vert (I-P_j)S \Vert \sum_{k=1, k \neq j}^n \dfrac{(\beta + \beta^2 + \cos (\angle (P_j,P_k)))  \cos (\angle (P_j,P_k))}{1-\cos (\angle (P_j,P_k))} 2 \alpha_k.
\end{dmath}
Combining \eqref{first summand} and \eqref{second summand} yields
\begin{dmath*}
E(TS) \leq \\
\sum_{j=1}^n \alpha_j \Vert (I-P_j)S \Vert \left( (1-\alpha_j)\beta + \sum_{k=1, k \neq j}^n \dfrac{(\beta + \beta^2 + \cos (\angle (P_j,P_k)))  \cos (\angle (P_j,P_k))}{1-\cos (\angle (P_j,P_k))} 2 \alpha_k \right) \leq 
r E(S).
\end{dmath*}
\textbf{Step 2:} We'll show that $T^i$ is a Cauchy sequence and therefore convergences. Indeed, let $i \geq 1$, then 
\begin{dmath*}
\Vert T^{i+1} - T^i \Vert = \left\Vert \left( \sum_{j=1}^n \alpha_j P_j - I \right) T^i \right\Vert \leq \sum_{j=1}^n \alpha_j \Vert (I-P_j) T^i \Vert = E(T^i) \leq r^i E(I),
\end{dmath*}
where the last inequality is due to step 1. Recall that we assumed that $r<1$ and note that $E(I)$ is a constant and therefore $T^i$ is indeed a Cauchy sequence. Note that 
$$E(I) = \sum_{j=1}^n \alpha_j \Vert I-P_j \Vert \leq \sum_{j=1}^n \alpha_j (1+\Vert P_j \Vert) \leq 1+ \beta.$$
Denote by $T^\infty$ the limit of $T^i$, then for every $i$ we have that 
$$\Vert T^\infty - T^i \Vert \leq \sum_{k=i}^\infty E(I) r^k = \dfrac{E(I)}{1-r} r^i \leq \dfrac{1+\beta}{1-r} r^i. $$
\textbf{Step 3:} We'll show that $T^\infty$ is a projection on $\im (P_1) \cap ... \cap \im (P_n)$. $T^\infty$ is the limit of $T^i$ and therefore $(T^\infty)^2=T^\infty$, i.e., $T^\infty$ is a projection. We are left to show that $\im (P_1) \cap ... \cap \im (P_n) = \im (T^\infty)$. For each $v \in \im (P_1) \cap ... \cap \im (P_n)$, we have that $P_j v = v$ for every $1 \leq j \leq n$ and therefore $Tv = v$ which implies that $T^\infty v = v$, i.e., $\im (P_1) \cap ... \cap \im (P_n) \subseteq \im (T^\infty)$. On the other hand, $T T^\infty = T^\infty$ and therefore, by step 1, we have that $E(T^\infty) = E(T T^\infty) \leq r E(T^\infty)$, which implies that $E(T^\infty)=0$. This yields $(I-P_j)T^\infty =0$ for every $j$ and therefore $\im (T^\infty) \subseteq \im (P_1) \cap ... \cap \im (P_n)$.
\end{proof}

As a corollary we can deduce a criterion for uniform convergence for the case $T=\frac{P_1+...+P_n}{n}$ which is similar to the criterion of \cite{ORobust}[Theorem 3.12] (the constants differ a little):
\begin{corollary}
\label{Quick uniform convergence criterion}
Let $X$ be a Banach space and let $P_1,...,P_n$ be projections in $X$ ($n \geq 2$). Assume that for every $1 \leq j_1 < j_2 \leq n$, there is a projection $P_{j_1,j_2}$ on $\im (P_{j_1}) \cap \im (P_{j_2})$, such that $P_{j_1,j_2} P_{j_1} = P_{j_1,j_2}, P_{j_1,j_2} P_{j_2} = P_{j_1,j_2}$. 

Denote $T=\frac{P_1+...+P_n}{n}$ and assume there is a constant $0 \leq \beta < 1+\frac{1}{n-1}$ such that $\max \lbrace \Vert P_1 \Vert,...,\Vert P_n \Vert \rbrace \leq \beta$ and that $\cos (\angle (P_{j_1},P_{j_2})) <1$ for every $1 \leq j_1, j_2 \leq n$. Then there is $\gamma = \gamma (\beta,n) >0$ such that if 
$$\forall 1 \leq j_1, j_2 \leq n, \cos (\angle (P_1,P_2)) \leq \gamma,$$
then there is a projection $T^\infty$ on $\im (P_1) \cap ... \cap \im (P_n)$ and constants $C=C>0$, $0 < r <1$ that depend only on $\beta$ and $\gamma$ such that $\Vert T^\infty - T^i \Vert \leq C r^{i}$.
\end{corollary}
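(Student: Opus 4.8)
The plan is to derive this directly from Theorem \ref{averaged projection theorem} by taking $\alpha_1 = \dots = \alpha_n = \frac{1}{n}$. With this choice the first hypothesis of that theorem, $\beta < \min_j \frac{1}{1-\alpha_j}$, becomes exactly $\beta < \frac{1}{1-1/n} = 1 + \frac{1}{n-1}$, which is what we are assuming. So the only thing that needs work is to arrange the second hypothesis $r < 1$ by taking $\gamma$ small enough, and then to read off the constants.

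First I would isolate a monotonicity fact. Fix $\beta \geq 0$, put $c = \beta + \beta^2 \geq 0$, and consider $f(t) = \frac{(t+c)\,t}{1-t}$ on $[0,1)$. A direct computation gives $f'(t) = \frac{-t^2 + 2t + c}{(1-t)^2}$, which is non-negative on $[0,1)$, so $f$ is non-decreasing there; moreover $f$ is continuous on $[0,1)$ and $f(0) = 0$. Note that, with $\alpha_k = \frac{1}{n}$, the $k$-th summand in the definition of $r$ in Theorem \ref{averaged projection theorem} is precisely $\frac{2}{n} f\big(\cos(\angle(P_j,P_k))\big)$, and $(1-\alpha_j)\beta = \frac{n-1}{n}\beta$.

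Next I would define $g(t) = \frac{n-1}{n}\beta + \frac{2(n-1)}{n} f(t)$ for $t \in [0,1)$, which is continuous, non-decreasing, and satisfies $g(0) = \frac{n-1}{n}\beta$. Since $\beta < 1 + \frac{1}{n-1} = \frac{n}{n-1}$, we get $g(0) < 1$, so by continuity there is $\gamma = \gamma(\beta,n) \in (0,1)$ with $g(\gamma) < 1$; fix such a $\gamma$. Now suppose $\cos(\angle(P_{j_1},P_{j_2})) \leq \gamma$ for all $j_1, j_2$. Then for each $j$, since the sum over $k \neq j$ has $n-1$ terms and $f$ is non-decreasing,
\begin{align*}
(1-\alpha_j)\beta + \sum_{k \neq j} \frac{(\cos(\angle(P_j,P_k)) + \beta + \beta^2)\cos(\angle(P_j,P_k))}{1 - \cos(\angle(P_j,P_k))}\, 2\alpha_k
&= \frac{n-1}{n}\beta + \frac{2}{n}\sum_{k \neq j} f\big(\cos(\angle(P_j,P_k))\big) \\
&\leq \frac{n-1}{n}\beta + \frac{2(n-1)}{n} f(\gamma) = g(\gamma).
\end{align*}
Hence the constant $r$ of Theorem \ref{averaged projection theorem} satisfies $r \leq g(\gamma) < 1$, and, since $\cos(\angle(P_{j_1},P_{j_2})) \leq \gamma < 1$, all remaining hypotheses of that theorem are met. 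It therefore yields a projection $T^\infty$ on $\im(P_1) \cap \dots \cap \im(P_n)$ with $\Vert T^i - T^\infty \Vert \leq \frac{1+\beta}{1-r} r^i \leq \frac{1+\beta}{1-g(\gamma)}\, g(\gamma)^i$, so the conclusion holds with final constants $C = \frac{1+\beta}{1-g(\gamma)}$ and rate $g(\gamma)$, which depend only on $\beta$ and $\gamma$ (hence only on $\beta$ and $n$).

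There is no genuine obstacle here: the statement is a routine specialization of Theorem \ref{averaged projection theorem}. The only point needing a (very short) argument is the monotonicity of $f$ on $[0,1)$, since this is what lets one pass from the pairwise bound $\cos(\angle(P_j,P_k)) \leq \gamma$ to a bound on the min-expression defining $r$; the rest is the observation that $\frac{n-1}{n}\beta < 1$ under the stated hypothesis on $\beta$, together with continuity at $\gamma = 0$.
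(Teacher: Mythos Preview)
Your proof is correct and follows essentially the same route as the paper: specialize Theorem \ref{averaged projection theorem} to $\alpha_j=\tfrac{1}{n}$, note that the expression governing $r$ becomes $g(t)=\tfrac{n-1}{n}\beta+\tfrac{2(n-1)}{n}\,\tfrac{(t+\beta+\beta^2)t}{1-t}$, and use $g(0)=\tfrac{n-1}{n}\beta<1$ together with continuity/monotonicity to pick $\gamma$ with $g(\gamma)<1$. Your explicit derivative check and the final step of replacing $r$ by the uniform bound $g(\gamma)$ (so the constants truly depend only on $\beta,\gamma$) are minor clarifications, not a different argument.
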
 

\begin{proof}
Fix $\beta < 1+\frac{1}{n-1}$ and define a function 
$$f(x) = \dfrac{n-1}{n} \beta +2 \dfrac{n-1}{n}\dfrac{(x + \beta+\beta^2) x}{1-x}.$$
$f(x)$ is clearly continuous and strictly monotone increasing in the interval $[0,1)$ and $\lim_{x \rightarrow 1} f(x) = \infty$. Also, note that $f(0) <1$, by the choice of $\beta$ and therefore there is $\gamma' >0$ such that $f(\gamma')=1$. Take $\gamma$ to be any number such that $0 < \gamma < \gamma'$ and assume that $ \max_{j_1,j_2} \cos (\angle (P_{j_1},P_{j_2})) \leq \gamma$.
  
By this choice of $\gamma$, we have that
$$r=\min_{1 \leq j \leq n} \left( \dfrac{n-1}{n} \beta +2 \dfrac{n-1}{n} \dfrac{(\cos (\angle (P_j,P_k))+ \beta+\beta^2) \cos (\angle (P_j,P_k))}{1-\cos (\angle (P_j,P_k))} \right) \leq f(\gamma) <1.$$
Therefore we can apply Theorem \ref{averaged projection theorem} with $\alpha_1 =...=\alpha_n=\frac{1}{n}$ and deduce that there is $C=\frac{1+\beta}{1-r}$ such that $\Vert T^\infty - T^i \Vert \leq C r^i$, with $T^\infty$ a projection on $\im (P_1) \cap ... \cap \im (P_n)$. 
\end{proof}

\begin{remark}
\label{gamma, r, C remark}
The constants $C,r,\gamma$ in the above corollary can be computed explicitly: note that finding $\gamma'$ such that $f(\gamma')=1$ boils down to solving a quadratic equation and therefore $\gamma$ can be taken as $\frac{\gamma'}{2}$ and $r$, $C$ can be taken as $r=f(\frac{\gamma'}{2}), C=\frac{1+\beta}{1-r}$.
\end{remark}

Last, we note that $T^i$ converges to a ``canonical'' projection with respect to $P_1,...,P_n$ if such a projection exists.

\begin{proposition}
\label{canonical proposition}
Let $X$ be a Banach space and let $P_1,...,P_n$ be projections in $X$ ($n \geq 2$). Let $\alpha_1,...,\alpha_n$ be positive constants such that $\sum_k \alpha_k =1$. Denote $T = \sum_k \alpha_k P_k$. Assume that $T^i$ converges in the operator norm to $T^\infty$ which is a projection on $\bigcap_{j=1}^n \im (P_j)$. If there is a projection $P_{1,2,...,n}$ on $\bigcap_{j=1}^n \im (P_j)$ such that for every $j$, $P_{1,2,...,n} P_j = P_{1,2,...,n}$, then $T^\infty = P_{1,2,...,n}$.
\end{proposition}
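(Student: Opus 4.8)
The plan is to exploit the two hypotheses on $P_{1,2,\ldots,n}$ separately and then combine them. Write $Q = P_{1,2,\ldots,n}$ for brevity; by assumption $Q$ is a projection with $\im(Q) = \bigcap_{j=1}^n \im(P_j)$ and $Q P_j = Q$ for every $j$. The same assumption gives $\im(T^\infty) = \bigcap_{j=1}^n \im(P_j) = \im(Q)$.

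First I would show that $Q T^\infty = Q$. Since $\sum_k \alpha_k = 1$ and $Q P_k = Q$ for each $k$, we get $Q T = \sum_k \alpha_k Q P_k = \sum_k \alpha_k Q = Q$; iterating gives $Q T^i = Q$ for every $i \geq 1$. Passing to the limit in the operator norm — which is legitimate because $\Vert Q T^i - Q T^\infty \Vert \leq \Vert Q \Vert \, \Vert T^i - T^\infty \Vert \to 0$ — yields $Q T^\infty = Q$.

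Next I would show that $Q T^\infty = T^\infty$. Indeed, for any $x \in X$ we have $T^\infty x \in \im(T^\infty) = \im(Q)$, and since $Q$ is a projection it acts as the identity on its own image, so $Q(T^\infty x) = T^\infty x$; hence $Q T^\infty = T^\infty$ as operators.

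Combining the two identities gives $T^\infty = Q T^\infty = Q = P_{1,2,\ldots,n}$, which is the claim. I do not expect a genuine obstacle here: the only points requiring (minor) care are that one may pass the operator-norm limit through left multiplication by the fixed bounded operator $Q$, and that the equality $\im(T^\infty) = \im(Q)$ — part of the hypothesis — is exactly what powers the second step.
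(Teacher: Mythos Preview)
Your proof is correct and is exactly the paper's argument, only written out in more detail: the paper simply notes $P_{1,\ldots,n}T^i=P_{1,\ldots,n}$ for all $i$ and concludes $T^\infty=P_{1,\ldots,n}T^\infty=P_{1,\ldots,n}$, leaving implicit the two points you spell out (passing to the limit through left multiplication by $P_{1,\ldots,n}$, and using $\im(T^\infty)=\im(P_{1,\ldots,n})$ for the first equality).
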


\begin{proof}
Note that for every $i$, we have that $P_{1,...,n} T^i = P_{1,...,n}$ and therefore $T^\infty = P_{1,...,n} T^\infty = P_{1,...,n}$.
\end{proof}

\section{Convergence of random products of projections}
Below, we'll give ``angle criteria'' for the convergence of cyclic, quasi-periodic and random products for projections. Due to the messy nature of the computations, we keep these criteria non-effective, i.e., we do not specify the exact bounds on the angle to ensure uniform convergence, but only show these bounds exist. The interested reader can derive effective versions of all our criteria below by substituting our use of $\gamma$, $r$ and $C$ below by explicit constants such as the ones given in Remark \ref{gamma, r, C remark} and then working out all the computations explicitly with these constants.

Throughout, let $P_1,...,P_n$ be projections in a Banach space $X$. Below we will always assume that for every $1 \leq j_1 < j_2 \leq n$, there is a projection $P_{j_1,j_2}$ on $\im (P_{j_1}) \cap \im (P_{j_2})$, such that $P_{j_1,j_2} P_{j_1} = P_{j_1,j_2}, P_{j_1,j_2} P_{j_2} = P_{j_1,j_2}$.  and therefore $\cos (\angle (P_i,P_j))$ is defined.

\begin{theorem}
\label{Criterion for product to be contracting}
Let $P_1,...,P_n$ be as above. Assume that there is a constant $\beta < 1+\frac{1}{n-1}$ such that   
$$ \max \lbrace \Vert P_1 \Vert,..., \Vert P_n \Vert \rbrace \leq \beta.$$
There is a constant $\gamma = \gamma (\beta,n)$ such that if 
$$ \max \lbrace  \cos(\angle (P_{j_1},P_{j_2})) : 1 \leq j_1 < j_2 \leq n \rbrace \leq \gamma,$$
then there is a projection $T^\infty$ on $\bigcap_{j=1}^n \im (P_j)$ and constants $C>0, 0 < r <1$ that depend only on $\gamma, \beta$ such that for every integer $m \geq n$ and every surjective map $\sigma : \lbrace 1,...,m \rbrace \rightarrow \lbrace 1,...,n \rbrace$ the following holds for every $i$: 
$$\Vert P_{\sigma (m)} ... P_{\sigma (1)}  - T^\infty \Vert \leq \beta^m Cr^{i} +2 \gamma \beta^{m-2} ((2+\beta)^i-1).$$
\end{theorem}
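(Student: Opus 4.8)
The plan is to compare the product $Q:=P_{\sigma(m)}\cdots P_{\sigma(1)}$ with the averaged operator $T=\frac{1}{n}(P_1+\cdots+P_n)$, whose powers are already controlled. First I would invoke Corollary~\ref{Quick uniform convergence criterion} for $P_1,\dots,P_n$: since $\beta<1+\frac{1}{n-1}$ and $\cos(\angle(P_{j_1},P_{j_2}))<1$ for all pairs, it produces a threshold $\gamma=\gamma(\beta,n)>0$ such that, whenever $\max_{j_1<j_2}\cos(\angle(P_{j_1},P_{j_2}))\le\gamma$, there is a projection $T^\infty$ onto $\bigcap_j\im(P_j)$ together with constants $C=C(\beta,\gamma)$ and $r=r(\beta,\gamma)\in(0,1)$ satisfying $\Vert T^i-T^\infty\Vert\le Cr^i$; these will be the $T^\infty$, $C$, $r$ of the statement (one is free to shrink $\gamma$ further). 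I would also record, from Step~3 of the proof of Theorem~\ref{averaged projection theorem}, that $(I-P_j)T^\infty=0$, i.e.\ $P_jT^\infty=T^\infty$ for every $j$; since $Q$ is a product of the $P_j$'s, this gives $QT^\infty=T^\infty$.

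Next I would reduce to a single estimate. Writing $I-T^\infty=(I-T^i)+(T^i-T^\infty)$, using $QT^\infty=T^\infty$ and $\Vert Q\Vert\le\beta^m$, one gets
\[
\Vert Q-T^\infty\Vert=\Vert Q(I-T^\infty)\Vert\le\Vert Q(I-T^i)\Vert+\Vert Q\Vert\,\Vert T^i-T^\infty\Vert\le\Vert Q(I-T^i)\Vert+\beta^mCr^i,
\]
which produces the first summand of the claimed bound. To handle $b_i:=\Vert Q(I-T^i)\Vert$ I would run a recursion in $i$: from $I-T^i=(I-T^{i-1})+T^{i-1}(I-T)$ and $T^{i-1}=I-(I-T^{i-1})$ one obtains $Q(I-T^i)=Q(I-T^{i-1})+Q(I-T)-Q(I-T^{i-1})(I-T)$, whence, using $\Vert I-T\Vert\le 1+\Vert T\Vert\le 1+\beta$,
\[
b_i\le(2+\beta)\,b_{i-1}+\Vert Q(I-T)\Vert .
\]
Since $b_0=0$, this solves to $b_i\le\frac{(2+\beta)^i-1}{1+\beta}\Vert Q(I-T)\Vert$, so the whole theorem reduces to proving $\Vert Q(I-T)\Vert\le 2(1+\beta)\gamma\beta^{m-2}$.

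This last estimate is where surjectivity of $\sigma$ and the angle hypothesis are used, and it is the main obstacle. Since $T=\frac{1}{n}\sum_k P_k$, one has $Q(I-T)=\frac{1}{n}\sum_k Q(I-P_k)$, so it suffices to bound each $\Vert Q(I-P_k)\Vert$. Fix $k$; by surjectivity $P_k$ occurs in $Q$, so, letting $k'$ be the least index with $\sigma(k')=k$, I would factor $Q=A_kP_kB_k$ with $A_k=P_{\sigma(m)}\cdots P_{\sigma(k'+1)}$ and $B_k=P_{\sigma(k'-1)}\cdots P_{\sigma(1)}$ both free of $P_k$. A one-line computation using $P_k(I-P_k)=0$ then gives
\[
Q(I-P_k)=-A_k\,[B_k,P_k]\,(I-P_k),\qquad [B_k,P_k]=B_kP_k-P_kB_k,
\]
and expanding the commutator telescopically writes $[B_k,P_k]$ as a sum over $1\le l\le k'-1$ of terms $P_{\sigma(k'-1)}\cdots P_{\sigma(l+1)}(P_{\sigma(l)}P_k-P_kP_{\sigma(l)})P_{\sigma(l-1)}\cdots P_{\sigma(1)}$. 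Each factor $P_{\sigma(l)}P_k-P_kP_{\sigma(l)}$ has norm at most $2\gamma$, directly from Definition~\ref{angle between projections definition} (both $P_{\sigma(l)}P_k$ and $P_kP_{\sigma(l)}$ lie within $\gamma$ of $P_{\sigma(l),k}$, since $\im(P_{\sigma(l),k})\subseteq\im(P_{\sigma(l)})\cap\im(P_k)$ forces $P_{\sigma(l)}P_{\sigma(l),k}=P_{\sigma(l),k}=P_kP_{\sigma(l),k}$). Bounding all remaining projection factors, together with $A_k$ and $I-P_k$, by the appropriate powers of $\beta$ yields an estimate of the shape $\Vert Q(I-P_k)\Vert\le(\text{const})\,\gamma\,\beta^{m-2}(1+\beta)$, hence the desired bound on $\Vert Q(I-T)\Vert$; feeding this back through the previous two paragraphs completes the proof.

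The delicate point — the step I expect to fight with — is the constant in this last display: the telescoping sum has up to $k'-1$ terms, so the crude argument loses a factor of order $m$, and obtaining the clean constant $2(1+\beta)\gamma\beta^{m-2}$ uniformly in $m$ requires organising the commutator estimates more carefully (for instance by charging the commutators to the energy-type quantities $\Vert(I-P_j)(\cdot)\Vert$ via Lemma~\ref{p1p2-p2p1 lemma}, rather than using the blunt bound $\Vert P_aP_b-P_bP_a\Vert\le2\gamma$, so that the many terms combine instead of merely adding up). Everything else is elementary: the two splittings, the geometric recursion, and the bound $\Vert Q\Vert\le\beta^m$.
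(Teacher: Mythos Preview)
Your argument is essentially identical to the paper's: the same splitting $Q-T^\infty=Q(I-T^i)+Q(T^i-T^\infty)$, the same reduction of $\Vert Q(I-T^i)\Vert$ to $\Vert Q(I-T)\Vert$ (your recursion $b_i\le(2+\beta)b_{i-1}+\Vert Q(I-T)\Vert$ and the paper's binomial identity $I-T^i=(I-T)\sum_{k=1}^{i}\binom{i}{k}(-1)^{k}(I-T)^{k-1}$ give exactly the same bound $\frac{(2+\beta)^i-1}{1+\beta}\Vert Q(I-T)\Vert$), and the same reduction to $\Vert Q(I-P_j)\Vert$ followed by commuting $P_j$ to the right. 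Your telescoping of $[B_k,P_k]$ is just the unrolled version of the paper's induction that swaps $P_{\sigma(l)}$ and $P_{\sigma(l-1)}$ one step at a time; both land on $\Vert Q(I-P_j)\Vert\le 2\gamma\beta^{m-2}(l-1)(1+\beta)$ with $l$ the first index where $\sigma(l)=j$.

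Regarding the ``delicate point'' you flag: you are not missing a trick. The paper's own proof also carries the factor $(l-1)\le m-1$; it explicitly aims for $\Vert Q(I-T)\Vert\le 2\gamma\beta^{m-2}(m-1)(1+\beta)$, so the displayed bound in the theorem statement is off by that factor of $m-1$. There is no sharper organisation via Lemma~\ref{p1p2-p2p1 lemma} hidden in the paper, and none is needed: in every subsequent use (Corollary~\ref{main corollary} and the results that build on it) $m$ is fixed before $\gamma$ is chosen, so the extra $m-1$ is absorbed into the choice of $\gamma$. Your proposal, with the honest bound $\Vert Q(I-P_k)\Vert\le 2\gamma(k'-1)\beta^{m-2}(1+\beta)$, is exactly what the paper proves.
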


\begin{proof}
Fix $m$, $\sigma$ as above.  Denote $T= \frac{P_1+...+P_n}{n}$ and let $\gamma = \gamma (\beta,n)$ be as in Corollary \ref{Quick uniform convergence criterion}, then by this corollary there are $C>0, 0<r<1$ such that for every $i$,
$$\Vert T^\infty - T^i \Vert \leq  C r^{i},$$
where $T^\infty$ is a projection on $\bigcap_{j=1}^n \im (P_j)$.  
Therefore for every $i$ the following holds:
\begin{align*}
\Vert P_{\sigma (m)} ... P_{\sigma (1)} - T^\infty \Vert = \Vert P_{\sigma (m)} ... P_{\sigma (1)} (I - T^\infty) \Vert \leq \\
\Vert P_{\sigma (m)} ... P_{\sigma (1)} (I- T^i) \Vert + \Vert P_{\sigma (m)} ... P_{\sigma (1)} (T^i - T^\infty) \Vert \leq \\
\Vert P_{\sigma (m)} ... P_{\sigma (1)} (I- T^i) \Vert + \Vert P_{\sigma (m)} ... P_{\sigma (1)} \Vert \Vert T^i - T^\infty \Vert \leq \\
\Vert P_{\sigma (m)} ... P_{\sigma (1)} (I- T^i) \Vert + \beta^m Cr^{i} .
\end{align*}
By the above inequality, it is left to prove that 
$$\Vert P_{\sigma (m)} ... P_{\sigma (1)} (I- T^i) \Vert \leq 2 \gamma \beta^{m-1} (2+\beta)^i.$$
Observe that 
$$I-T^i =I- (I-(I-T))^i = (I-T) \sum_{k=1}^i {i \choose k} (-1)^k (I-T)^{k-1} .$$
Therefore
\begin{align*}
\Vert P_{\sigma (m)} ... P_{\sigma (1)} (I- T^i) \Vert \leq \Vert P_{\sigma (m)} ... P_{\sigma (1)} (I- T) \Vert \left\Vert \sum_{k=1}^i {i \choose k} (-1)^k (I-T)^{k-1} \right\Vert \leq \\ \Vert P_{\sigma (m)} ... P_{\sigma (1)} (I- T) \Vert \left(  \sum_{k=1}^i {i \choose k} (1+\beta)^{k-1} \right) = \dfrac{(2+\beta)^i-1}{1+\beta} \Vert P_{\sigma (m)} ... P_{\sigma (1)} (I- T) \Vert.
\end{align*}
In order to finish the proof, we must show that 
$$\Vert P_{\sigma (m)} ... P_{\sigma (1)} (I- T) \Vert \leq 2 \gamma \beta^{m-2} (m-1) (1+\beta).$$
Note that 
$$\Vert P_{\sigma (m)} ... P_{\sigma (1)} (I- T) \Vert \leq \dfrac{\Vert P_{\sigma (m)} ... P_{\sigma (1)} (I-P_1) \Vert +...+ \Vert P_{\sigma (m)} ... P_{\sigma (1)} (I-P_n) \Vert }{n}$$
and therefore it is enough to show that for every $j$, 
$$\Vert P_{\sigma (m)} ... P_{\sigma (1)} (I-P_j) \Vert \leq 2 \gamma \beta^{m-2} (m-1) (1+\beta) .$$
Let $1 \leq l \leq m$ be the smallest integer such that $\sigma (l)=j$. We will finish the proof by showing by induction that 
$$\Vert P_{\sigma (m)} ... P_{\sigma (1)} (I-P_j) \Vert \leq 2 \gamma \beta^{m-2} (l-1) (1+\beta) .$$
For $l=1$, we have that $\sigma (1)=j$ and therefore 
$$ P_{\sigma (m)} ... P_{\sigma (1)} (I-P_j) =  P_{\sigma (m)} ... P_{\sigma (2)} P_j (I-P_j) =0$$
and we are done. Next, let $l>1$ and assume that the above inequality holds for $l-1$. Define $\sigma' : \lbrace 1,...,m \rbrace \rightarrow \lbrace 1,...,n \rbrace$ as 
$$\sigma' (k) = \begin{cases}
\sigma (k) & k >l \text{ or } k < l-1 \\
\sigma (l) & k = l-1 \\
\sigma (l-1) & k =l
\end{cases}.$$
Note that
\begin{dmath*}
\Vert P_{\sigma (m)} ... P_{\sigma (1)} (I-P_j)  - P_{\sigma' (m)} ... P_{\sigma' (1)} (I-P_j) \Vert = \\
\Vert P_{\sigma (m)} ... P_{\sigma (l+1)} ( P_{\sigma (l)} P_{\sigma (l-1)} -  P_{\sigma (l-1)} P_{\sigma (l)} )  P_{\sigma (l-2)} ... P_{\sigma (1)} (I-P_j) \Vert \leq \\
\Vert P_{\sigma (m)} ... P_{\sigma (l+1)} \Vert \Vert P_{\sigma (l)} P_{\sigma (l-1)} -  P_{\sigma (l-1)} P_{\sigma (l)} \Vert \Vert P_{\sigma (l-2)} ... P_{\sigma (1)} \Vert \Vert I-P_j \Vert \leq \\
\beta^{n-2} (1+\beta) \Vert  P_{\sigma (l)} P_{\sigma (l-1)} -  P_{\sigma (l-1)} P_{\sigma (l)} \Vert \leq \\
\beta^{n-2} (1+\beta) \Vert  P_{\sigma (l)} P_{\sigma (l-1)} -  P_{\sigma (l-1),\sigma (l)} \Vert + \beta^{n-2} (1+\beta) \Vert  P_{\sigma (l-1)} P_{\sigma (l)} -  P_{\sigma (l-1),\sigma (l)} \Vert \leq \\
 2 \gamma \beta^{m-2} (1+\beta),
\end{dmath*}
where the last inequality is due to the assumption that $\cos (\angle (P_{\sigma (l-1)},P_{\sigma (l)} ))\leq \gamma$. 

By definition $\sigma' (l-1) = \sigma (l) = j$ and therefore by the induction assumption, we have that
$$\Vert P_{\sigma' (m)} ... P_{\sigma' (1)} (I-P_j) \Vert \leq 2 \gamma \beta^{n-2} (l-2) (1+\beta) .$$
Combining this inequality with the previous inequality, we get that 
\begin{dmath*}
\Vert P_{\sigma (m)} ... P_{\sigma (1)} (I-P_j) \Vert \leq \\
\Vert P_{\sigma (m)} ... P_{\sigma (1)} (I-P_j) - P_{\sigma' (m)} ... P_{\sigma' (1)} (I-P_j) \Vert + \Vert P_{\sigma' (m)} ... P_{\sigma' (1)} (I-P_j) \Vert \leq \\
2 \gamma \beta^{m-2} (1+\beta) + 2 \gamma \beta^{m-2}(l-2) (1+\beta) = 2 \gamma \beta^{m-2}(l-1) (1+\beta),
\end{dmath*}
as needed.
\end{proof}

\begin{corollary}
\label{main corollary}
Let $P_1,...,P_n$ be as above. For every integer $m \geq n$ and constants $0 \leq q <1$, $0 \leq \beta < 1+ \frac{1}{n-1} $, there is a constant $\gamma = \gamma (\beta,n,m,q)>0$ such that if 
$$ \max \lbrace \Vert P_1 \Vert,..., \Vert P_n \Vert \rbrace \leq \beta \text{ and } \max \lbrace  \cos(\angle (P_{j_1},P_{j_2})) : 1 \leq j_1 < j_2 \leq n \rbrace \leq \gamma,$$
then for every surjective map $\sigma : \lbrace 1,...,m \rbrace \rightarrow \lbrace 1,...,n\rbrace$, we have that 
$$\Vert P_{\sigma (m)} ... P_{\sigma (1)}  - T^\infty \Vert \leq q,$$
where $T^\infty$ as the projection on $\bigcap_{j=1}^n \im (P_j)$ given in the above theorem.
\end{corollary}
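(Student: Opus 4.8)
The plan is to deduce Corollary \ref{main corollary} from Theorem \ref{Criterion for product to be contracting} by a direct estimate: fix $m$, $q$, $\beta$, and then choose $i$ and $\gamma$ appropriately so that the bound
$$\Vert P_{\sigma (m)} ... P_{\sigma (1)} - T^\infty \Vert \leq \beta^m C r^{i} + 2 \gamma \beta^{m-2} ((2+\beta)^i - 1)$$
drops below $q$. The key observation is that the two terms on the right have very different natures: the first decays geometrically in $i$ (with $r<1$) but carries the fixed constant $\beta^m C$, while the second grows in $i$ but carries the factor $\gamma$, which we are still free to make as small as we like. So the strategy is a two-step tuning: first pick $i$ large enough to kill the first term, then, with $i$ now fixed, pick $\gamma$ small enough to kill the second term.

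The steps, in order, are as follows. First, invoke Theorem \ref{Criterion for product to be contracting} to obtain $\gamma_0 = \gamma_0(\beta,n)$, and the associated constants $C>0$, $0<r<1$ and the projection $T^\infty$ on $\bigcap_{j=1}^n \im(P_j)$, valid whenever $\max_{j_1<j_2}\cos(\angle(P_{j_1},P_{j_2})) \leq \gamma_0$. Note that $C$ and $r$ depend only on $\beta$ and $n$ (hence on $\beta$ and $n$ only), so in particular they do not depend on the eventual choice of $\gamma \leq \gamma_0$. Second, since $r<1$ and $\beta^m C$ is a fixed positive constant, choose an integer $i = i(\beta,n,m,q)$ large enough that $\beta^m C r^{i} \leq q/2$. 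Third, with this $i$ now fixed, the quantity $2 \beta^{m-2} ((2+\beta)^i - 1)$ is a fixed finite constant, so we may choose
$$\gamma = \min\left\{ \gamma_0,\ \frac{q}{4 \beta^{m-2} ((2+\beta)^i - 1) + 1} \right\} > 0,$$
which guarantees $2 \gamma \beta^{m-2} ((2+\beta)^i - 1) \leq q/2$ (the "$+1$" in the denominator is just a harmless device to avoid division by zero in the degenerate case $\beta \leq 1$ and $i$ making the bracket vanish, which cannot actually happen here since $\beta \geq 1$ for nonzero projections, but it costs nothing). Fourth, conclude: this $\gamma$ depends only on $\beta$, $n$, $m$, $q$ (since $i$, $C$, $r$, $\gamma_0$ all do), and for any such configuration of projections and any surjective $\sigma$, Theorem \ref{Criterion for product to be contracting} applied with this $i$ gives
$$\Vert P_{\sigma (m)} ... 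P_{\sigma (1)} - T^\infty \Vert \leq \beta^m C r^{i} + 2 \gamma \beta^{m-2} ((2+\beta)^i - 1) \leq \frac{q}{2} + \frac{q}{2} = q.$$

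I do not expect any real obstacle here; the corollary is essentially a bookkeeping consequence of the theorem, and the only point requiring a moment's care is to make sure the dependencies are quantified in the right order — namely that $i$ is chosen \emph{before} $\gamma$, so that the growing factor $(2+\beta)^i$ is a fixed number at the moment we shrink $\gamma$, and that $C$, $r$, $\gamma_0$ genuinely depend only on $\beta$ and $n$ and not on the later parameters. One should also remark that the same $i$ and $\gamma$ work uniformly over all surjective $\sigma:\{1,\dots,m\}\to\{1,\dots,n\}$, which is immediate since the bound in Theorem \ref{Criterion for product to be contracting} is itself uniform in $\sigma$.
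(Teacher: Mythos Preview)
Your proposal is correct and follows essentially the same approach as the paper: first invoke Theorem \ref{Criterion for product to be contracting} to get $\gamma_0$, $C$, $r$; then choose $i$ large so the $\beta^m C r^i$ term is at most $q/2$; then with $i$ fixed choose $\gamma \leq \gamma_0$ small so the $2\gamma\beta^{m-2}((2+\beta)^i-1)$ term is at most $q/2$. The paper's argument is identical up to notation (it uses $\gamma_1$, $\gamma_2$, $i_0$ and takes $\gamma=\min\{\gamma_1,\gamma_2\}$), and your added remarks on the order of quantifiers and the uniformity in $\sigma$ are correct.
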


\begin{proof}
Fix $m,q,\beta$ as above and fix a surjective map $\sigma : \lbrace 1,...,m \rbrace \rightarrow \lbrace 1,...,n\rbrace$. Let $\gamma_1$ be the bound on the cosine of the angles as in the above theorem and let $r,C$ be the constants corresponding to $\gamma_1$. 

Therefore, if 
$$ \max \lbrace \Vert P_1 \Vert,..., \Vert P_n \Vert \rbrace \leq \beta \text{ and } \max \lbrace  \cos(\angle (P_{j_1},P_{j_2})) : 1 \leq j_1 < j_2 \leq n \rbrace \leq \gamma_1,$$
then by the above theorem we have for every $i$ that
$$\Vert P_{\sigma (m)} ... P_{\sigma (1)}  - T^\infty \Vert \leq \beta^m Cr^{i-1} +2 \gamma_1 \beta^{m-2} ((2+\beta)^i-1).$$

Fix $i_0$ large enough such that $\beta^m Cr^{i_0-1} \leq \frac{q}{2}$. Denote
$$\gamma_2 = \frac{1}{2 \beta^{m-2} ((2+\beta)^{i_0}-1)} \frac{q}{2}.$$
Then for $\gamma = \min \lbrace \gamma_1, \gamma_2 \rbrace$, we have that
$$\Vert P_{\sigma (m)} ... P_{\sigma (1)}  - T^\infty \Vert \leq \beta^m Cr^{i_0-1} +2 \gamma_2 \beta^{m-2} ((2+\beta)^{i_0}-1) \leq \frac{q}{2} + \frac{q}{2} = q.$$
\end{proof}

The above corollary allows us to deduce uniform convergence criteria in the cases of quasi-periodic products (see definition below) and random products under an additional assumption which we call weak consistency:

\begin{definition} [Consistency]
\label{consistency definition}
We shall say that the projections $P_1,...,P_n$ are consistent if for every $\lbrace i_1,...,i_k \rbrace \subseteq \lbrace 1,...,n \rbrace$, there is a projection $P_{i_1,...,i_k}$ on $\im (P_{i_1}) \cap \im (P_{i_1}) \cap ... \cap (P_{i_k})$, such that 
$$\forall 1 \leq j \leq k, P_{i_1,...,i_k} P_{i_j} = P_{i_1,...,i_k}.$$
We shall say that the projections $P_1,...,P_n$ are weakly consistent if there is a projection $P_{1,...,n}$ on $\bigcap_j \im (P_j)$ such that 
$$\forall 1 \leq j \leq n, P_{1,...,n} P_j = P_{1,...,n}.$$
\end{definition}

\begin{example}
In the case where $X= H$ is a Hilbert space and $P_1,...,P_n$ are orthogonal projections on $V_1,...,V_n$ correspondingly, we have that $P_1,...,P_n$ are consistent by taking $P_{i_1,...,i_k}$ to be the orthogonal projection on $V_{i_1} \cap ... \cap V_{i_k}$. Other examples of consistent projections are given in \cite{OBanachCoho}. 
\end{example}

\begin{remark}
We note that by Proposition \ref{canonical proposition}, if $P_1,...,P_n$ are weakly consistent and $T^i=(\frac{P_1+...+P_n}{n})^i$ converges in the operator norm to a projection $T^\infty$ on $\bigcap_j \im (P_j)$, then $T^\infty=P_{1,...,n}$, i.e., 
$$\forall 1 \leq j \leq n, T^\infty P_j = T^\infty.$$
\end{remark}

Next, we'll use Corollary \ref{main corollary} together with the assumption of weak consistency to deduce critria for uniform convergence. We'll start with proving such a criterion for the convergence of $(P_n ... P_1)^i$. We remark that although this case is well studied and various angle considerations were applied to it (see for instance \cite{BadeaGrivauxM}, \cite{PRZ3}, \cite{PRZ}, \cite{PRZ2}), this result is new even in the setting of orthogonal projections in Hilbert spaces.

\begin{proposition}
\label{prop cyclic}
Let $P_1,...,P_n$ be as above and let $0 \leq \beta < 1+\frac{1}{n-1}, 0 < q <1$ be constants. Then there is $\gamma = \gamma (\beta,q)$ such that if 
\begin{enumerate}
\item $ \max \lbrace \Vert P_1 \Vert,..., \Vert P_n \Vert \rbrace \leq \beta \text{ and } \max \lbrace  \cos(\angle (P_{j_1},P_{j_2})) : 1 \leq j_1 < j_2 \leq n \rbrace \leq \gamma.$
\item $P_1,...,P_n$ are weakly consistent.
\end{enumerate}
Then $\Vert (P_n ... P_1)^i - P_{1,...,n} \Vert \leq q^i$.
\end{proposition}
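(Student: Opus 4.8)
The plan is to reduce everything to Corollary~\ref{main corollary} and then exploit a simple algebraic identity for powers. First I would set $Q = P_n \ldots P_1$ and $P = P_{1,\ldots,n}$, and apply Corollary~\ref{main corollary} with $m = n$ and with the surjective map $\sigma$ taken to be the identity on $\lbrace 1,\ldots,n \rbrace$, feeding in the target value $q$ in the role of the corollary's constant: this produces a constant $\gamma = \gamma(\beta,q)$ (recall that $n$ is fixed throughout) so that, under hypothesis (1), $\Vert Q - T^\infty \Vert \leq q$, where $T^\infty$ is the projection on $\bigcap_j \im(P_j)$ furnished by Theorem~\ref{Criterion for product to be contracting}. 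Under hypothesis (2), weak consistency, the remark following Definition~\ref{consistency definition} (an instance of Proposition~\ref{canonical proposition}) identifies $T^\infty$ with $P = P_{1,\ldots,n}$, so that in fact $\Vert Q - P \Vert \leq q$.

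Next I would record two elementary facts about $P$. Since $\im(P) = \bigcap_j \im(P_j)$ and each $P_j$ acts as the identity on its own image, $P_j P = P$ for every $j$, and hence $Q P = P_n \ldots P_1 P = P$. Weak consistency gives the other side, $P P_j = P$ for every $j$, and hence $P Q = P$. Together with $P^2 = P$, an easy induction on $i$ yields $Q^i P = P$ and $P Q^i = P$, and then, most importantly, the identity $(Q - P)^i = Q^i - P$ for all $i \geq 1$: the inductive step is $(Q-P)^{i+1} = (Q-P)(Q^i - P) = Q^{i+1} - QP - PQ^i + P^2 = Q^{i+1} - P - P + P$.

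Finally, combining the two inputs, $\Vert Q^i - P \Vert = \Vert (Q-P)^i \Vert \leq \Vert Q - P \Vert^i \leq q^i$, which is exactly the claimed bound $\Vert (P_n \ldots P_1)^i - P_{1,\ldots,n} \Vert \leq q^i$.

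I do not expect a serious obstacle: the real content has been packed into Corollary~\ref{main corollary}, and the remaining work is the bookkeeping that passes from a single-step contraction estimate to a geometric bound on iterates. The one point that needs (minor) care is the identity $(Q-P)^i = Q^i - P$, which is what makes the contraction constant $q$ for $Q-P$ propagate to $q^i$ for $Q^i - P$ with no loss; this relies on $PQ = QP = P$, i.e.\ precisely on weak consistency, and is the reason hypothesis (2) cannot be omitted.
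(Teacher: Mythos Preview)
Your proposal is correct and matches the paper's own proof essentially line for line: both invoke Corollary~\ref{main corollary} with $m=n$ and the identity map $\sigma$, identify $T^\infty$ with $P_{1,\ldots,n}$ via weak consistency (the remark after Definition~\ref{consistency definition}), and then pass from the one-step bound to $q^i$ via the identity $(Q-P)^i=Q^i-P$. The only cosmetic difference is that the paper records the key operator as $P_n\cdots P_1(I-P_{1,\ldots,n})$ rather than $Q-P$, but since $QP=P$ these are equal, and your explicit induction for $(Q-P)^i=Q^i-P$ simply spells out what the paper leaves implicit.
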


\begin{proof}
Fix $\beta,q$ as above. Take $\gamma = \gamma  (\beta,n,m,q)>0$ given by Corollary \ref{main corollary}. Note that by weak consistency, we have that 
$$P_n ... P_1 - P_{1,...,n} = P_n ... P_1 (I - P_{1,...,n}) = (I-P_{1,...,n} )P_n ... P_1 (I - P_{1,...,n}).$$
Therefore 
$$\Vert (P_n ... P_1)^i - P_{1,...,n} \Vert = \Vert \left( (P_n ... P_1)(I - P_{1,...,n}) \right)^i \Vert \leq q^i,$$
where the last inequality is due to Corollary \ref{main corollary}.
\end{proof}

Next, we'll analyse the case of quasi-periodic products. Recall the following definition: 

\begin{definition}
A map $\tau \in \lbrace 1,...,n \rbrace^{\mathbb{N}}$ will be called quasi-periodic if there is some fixed integer $m \geq n$ (which will be called the quasi-period of $\tau$) such that for every $i \in \mathbb{N}$, 
$$\lbrace \tau (i), \tau (i+1),...,\tau (i+m-1) \rbrace = \lbrace 1,...,n\rbrace.$$
For $\tau$ quasi-periodic, the infinite product $... P_{\tau (i)}...P_{\tau (1)}$ will be called a quasi-periodic product. 
\end{definition}

\begin{theorem}
\label{thm quasi-periodic}
Let $P_1,...,P_n$ be as above and let $0 \leq \beta < 1+\frac{1}{n-1}$. Then for every $m \geq n$, there is $\gamma = \gamma (\beta,n,m)$ such that if 
\begin{enumerate}
\item $ \max \lbrace \Vert P_1 \Vert,..., \Vert P_n \Vert \rbrace \leq \beta \text{ and } \max \lbrace  \cos(\angle (P_{j_1},P_{j_2})) : 1 \leq j_1 < j_2 \leq n \rbrace \leq \gamma.$
\item $P_1,...,P_n$ are weakly consistent.
\end{enumerate}
Then every quasi-periodic product of $P_1,...,P_n$ with a quasi-period $m$ converges uniformly to $P_{1,...,n}$ as the rate of convergence is exponential.
\end{theorem}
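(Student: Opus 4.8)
The plan is to reduce convergence of a quasi-periodic product to repeated application of Corollary \ref{main corollary}. Fix a quasi-periodic map $\tau$ with quasi-period $m$ and let $\gamma = \gamma(\beta,n,m,q)$ be the constant from Corollary \ref{main corollary} applied with some fixed $0 < q < 1$ (the value of $q$ only affects the rate, so we may as well take, say, $q = \tfrac12$). For each integer $l \geq 0$ consider the block $B_l = P_{\tau(lm+m)} \cdots P_{\tau(lm+1)}$ consisting of $m$ consecutive factors. By the definition of quasi-periodicity, the index map $k \mapsto \tau(lm+k)$ on $\lbrace 1,\dots,m\rbrace$ is surjective onto $\lbrace 1,\dots,n\rbrace$, so Corollary \ref{main corollary} gives $\Vert B_l - T^\infty \Vert \leq q$ for every $l$, where by weak consistency and the remark following Proposition \ref{canonical proposition} we have $T^\infty = P_{1,\dots,n}$.

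The key algebraic step is then the telescoping identity that lets us control a product of blocks. Since $P_{1,\dots,n}P_j = P_{1,\dots,n}$ for every $j$, we get $B_l P_{1,\dots,n} = P_{1,\dots,n}$; combined with $T^\infty$ being a projection, $(B_l - P_{1,\dots,n})P_{1,\dots,n} = 0$, hence for any operator $S$ with $P_{1,\dots,n}S = P_{1,\dots,n}$ one has $B_l S - P_{1,\dots,n} = (B_l - P_{1,\dots,n})(S - P_{1,\dots,n})$. Setting $S = B_{l-1}\cdots B_0$ and noting $P_{1,\dots,n} B_{l-1}\cdots B_0 = P_{1,\dots,n}$ (again by weak consistency, applied factor by factor), a straightforward induction on the number of blocks yields
\[
\Vert B_L \cdots B_0 - P_{1,\dots,n} \Vert \leq \prod_{l=0}^{L} \Vert B_l - P_{1,\dots,n}\Vert \leq q^{L+1}.
\]
Thus along the subsequence of indices that are multiples of $m$, the product $\dots P_{\tau(i)}\dots P_{\tau(1)}$ converges to $P_{1,\dots,n}$ at rate $q^{i/m}$.

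Finally I would handle indices that are not multiples of $m$: write an arbitrary $N$ as $N = Lm + s$ with $0 \leq s < m$, and bound
\[
\Vert P_{\tau(N)}\cdots P_{\tau(1)} - P_{1,\dots,n}\Vert
= \Vert P_{\tau(N)}\cdots P_{\tau(Lm+1)}\,(B_{L-1}\cdots B_0 - P_{1,\dots,n})\Vert
\leq \beta^{s}\, q^{L},
\]
using $\Vert P_{\tau(N)}\cdots P_{\tau(Lm+1)}\Vert \leq \beta^s$ and the factored form from the previous paragraph (here I use that $P_{1,\dots,n}$ commutes out of the leftover factors exactly as before). Since $\beta^s \leq \max\{1,\beta^{m}\}$ is a fixed constant and $L \geq N/m - 1$, this gives an exponential rate $\Vert P_{\tau(N)}\cdots P_{\tau(1)} - P_{1,\dots,n}\Vert \leq C' \rho^{N}$ with $\rho = q^{1/m} < 1$, proving uniform convergence at an exponential rate. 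The main obstacle is purely bookkeeping: making sure the telescoping identity is applied with the factors grouped correctly so that weak consistency ($P_{1,\dots,n}P_j = P_{1,\dots,n}$) can be invoked at each stage, and that the leftover $s < m$ factors are absorbed into a harmless constant rather than spoiling the geometric decay. No new estimate beyond Corollary \ref{main corollary} is needed.
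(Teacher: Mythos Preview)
Your proposal is correct and is essentially the paper's own argument: block the quasi-periodic product into pieces of length $m$, apply Corollary \ref{main corollary} to each block, use weak consistency together with $P_j P_{1,\dots,n}=P_{1,\dots,n}$ to factor $B_L\cdots B_0 - P_{1,\dots,n}$ as $\prod_l (B_l - P_{1,\dots,n})$, and absorb the leftover $<m$ factors into the constant $\beta^{m-1}$. The only slip is one of attribution: the identity $B_l P_{1,\dots,n}=P_{1,\dots,n}$ follows from $P_j P_{1,\dots,n}=P_{1,\dots,n}$ (which holds because $\im(P_{1,\dots,n})\subseteq\im(P_j)$), not from the weak-consistency relation $P_{1,\dots,n}P_j=P_{1,\dots,n}$ as you wrote --- but since both identities hold and you invoke each one where it is actually needed, the argument goes through unchanged.
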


\begin{proof}
Fix $\beta$ and $m$ and let $\tau \in  \lbrace 1,...,n \rbrace^{\mathbb{N}}$ be some quasi-periodic map with a quasi-period $m$. Fix some $q<1$ and let $\gamma = \gamma (\beta,n,m,q)$ to be the constant given by Corollary \ref{main corollary}. Then by Corollary \ref{main corollary} and by then quasi-periodicity of $\tau$, if 
$$ \max \lbrace \Vert P_1 \Vert,..., \Vert P_n \Vert \rbrace \leq \beta \text{ and } \max \lbrace  \cos(\angle (P_{j_1},P_{j_2})) : 1 \leq j_1 < j_2 \leq n \rbrace \leq \gamma,$$
then for every $i$, $\Vert P_{\tau (i+m-1)} ... P_{\tau (i)} - P_{1,...,n} \Vert \leq q$.

From the assumption of weak consistency, we have that for every $i_1 < i_2$ the following holds
$$P_{\tau (i_2)} ... P_{\tau (i_1)} (I - P_{1,...,n}) = (I - P_{1,...,n}) P_{\tau (i_2)} ... P_{\tau (i_1)} (I - P_{1,...,n}).$$

Therefore, for every $i$ we have that 
\begin{dmath*}
\Vert P_{\tau (i)} ... P_{\tau (1)} - P_{1,...,n} \Vert = \\
\Vert P_{\tau (i)} ... P_{\tau (\lfloor \frac{i}{m} \rfloor m +1)} \prod_{l=1}^{\lfloor \frac{i}{m} \rfloor} \left( P_{\tau (lm)} ... P_{\tau ((l-1)m+1)} - P_{1,...,n} \right) \Vert \leq \\
 \Vert P_{\tau (i)} ... P_{\tau (\lfloor \frac{i}{m} \rfloor m +1)} \Vert \prod_{l=1}^{\lfloor \frac{i}{m} \rfloor} \Vert P_{\tau (lm)} ... P_{\tau ((l-1)m+1)} - P_{1,...,n} \Vert \leq
 \beta^{m-1} q^{\lfloor \frac{i}{m} \rfloor}.
\end{dmath*}
Therefore $\Vert P_{\tau (i)} ... P_{\tau (1)} - P_{1,...,n} \Vert$ tends to $0$ exponentially fast as $i$ tends to infinity.
\end{proof}

The proof of the analogous theorem when $\tau \in \lbrace 1,...,n \rbrace^{\mathbb{N}}$ is chosen according to some product probability is similar, but requires a little more work:

\begin{theorem}
\label{thm random}
Let $P_1,...,P_n$ be as above and let $0 \leq \beta < 1+\frac{1}{n-1}$. Also, let $\mu$ be a probability measure on $\lbrace 1,...,n\rbrace$ such that for each $j$, $\mu (j) >0$. Then there is $\gamma = \gamma (\beta,n,\mu)$ such that if the following holds:
\begin{enumerate}
\item $ \max \lbrace \Vert P_1 \Vert,..., \Vert P_n \Vert \rbrace \leq \beta \text{ and } \max \lbrace  \cos(\angle (P_{j_1},P_{j_2})) : 1 \leq j_1 < j_2 \leq n \rbrace \leq \gamma.$
\item $P_1,...,P_n$ are weakly consistent.
\end{enumerate}
Then for the space $\lbrace 1,...,n \rbrace^{\mathbb{N}}$ with the infinite product measure $\overline{ \mu} = \prod_{i=1}^\infty \mu$, we have that for $\overline{\mu}$-almost every $\tau \in \lbrace 1,...,n \rbrace^{\mathbb{N}}$, the infinite product $... P_{\tau (i)} ... P_{\tau (1)}$ converges uniformly to $P_{1,...,n}$ and the rate of convergence is exponential.
\end{theorem}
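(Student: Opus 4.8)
The plan is to reduce everything to Corollary \ref{main corollary} in exactly the way Theorem \ref{thm quasi-periodic} is proved, but to replace the deterministic ``every window of length $m$ hits all of $\{1,\dots,n\}$'' property of a quasi-period by an almost-sure statement supplied by the strong law of large numbers. The product $P_{\tau(i)}\cdots P_{\tau(1)}$ will be cut into consecutive blocks of length $n$; the blocks that happen to be bijections onto $\{1,\dots,n\}$ are contracting by Corollary \ref{main corollary}, the other blocks can expand the norm by a bounded amount, and along $\overline{\mu}$-a.e.\ $\tau$ the contracting blocks have positive density, which (for a suitable choice of constants) forces exponential decay.

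First I would record the algebraic facts coming from weak consistency. Since $\im(P_{1,\dots,n})\subseteq\bigcap_j\im(P_j)$ we get $P_jP_{1,\dots,n}=P_{1,\dots,n}=P_{1,\dots,n}P_j$ for every $j$, so $Q:=I-P_{1,\dots,n}$ commutes with each $P_j$, hence with every finite product of the $P_j$'s, and for any word $w$ one has $P_{w(k)}\cdots P_{w(1)}Q=P_{w(k)}\cdots P_{w(1)}-P_{1,\dots,n}$. Under the hypotheses Corollary \ref{Quick uniform convergence criterion} applies to $T=\frac{P_1+\dots+P_n}{n}$, and Proposition \ref{canonical proposition} identifies its limit $T^\infty$ with $P_{1,\dots,n}$; in particular $\|P_{1,\dots,n}\|=\|T^\infty\|\le\|T\|+\|T-T^\infty\|\le\beta+\frac{(1+\beta)r}{1-r}$, and since $r$ stays below a constant $<1$ depending only on $\beta,n$ once $\gamma$ is below a threshold $\gamma_0(\beta,n)$, we get $\|P_{1,\dots,n}\|\le B_0(\beta,n)$. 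Consequently every product $B$ of $n$ of the $P_j$'s satisfies $\|BQ\|=\|B-P_{1,\dots,n}\|\le\beta^n+B_0=:M$, a constant depending only on $\beta,n$.

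Next I would fix the constants in the right order. Let $\rho=n!\prod_j\mu(j)>0$, the $\overline{\mu}$-probability that a given block of $n$ consecutive coordinates of $\tau$ is a bijection onto $\{1,\dots,n\}$; pick $q\in(0,1)$ small enough that $\kappa:=q^{\rho/2}M^{1-\rho/2}<1$ (possible since $M\ge1$), so $q$ depends only on $\beta,n,\mu$; and set $\gamma=\gamma(\beta,n,\mu):=\min\{\gamma_0(\beta,n),\,\gamma(\beta,n,n,q)\}$, the latter from Corollary \ref{main corollary} with $m=n$. Now partition $\mathbb{N}$ into blocks $I_l=\{(l-1)n+1,\dots,ln\}$ and call $I_l$ good if $\tau|_{I_l}$ is a bijection onto $\{1,\dots,n\}$. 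As $\overline{\mu}$ is a product measure, the events $\{I_l\text{ good}\}$ are i.i.d.\ of probability $\rho$, so by the strong law of large numbers, for $\overline{\mu}$-a.e.\ $\tau$ there is $l_0(\tau)$ with $\#\{s\le l:I_s\text{ good}\}\ge\frac{\rho}{2}l$ for all $l\ge l_0$. Writing $B_s=P_{\tau(sn)}\cdots P_{\tau((s-1)n+1)}$ and using that $Q$ commutes with each $B_s$ and $Q^2=Q$,
$$P_{\tau(ln)}\cdots P_{\tau(1)}-P_{1,\dots,n}=B_l\cdots B_1Q=(B_lQ)(B_{l-1}Q)\cdots(B_1Q),$$
whose norm is at most $\prod_{s=1}^l\|B_sQ\|$; for a good $s$ Corollary \ref{main corollary} gives $\|B_sQ\|=\|B_s-P_{1,\dots,n}\|\le q$, while always $\|B_sQ\|\le M$, so for $l\ge l_0$ the norm is $\le q^{\rho l/2}M^{(1-\rho/2)l}=\kappa^l\to0$. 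Allowing for a final incomplete block multiplies this by at most $\beta^n$, so for a.e.\ $\tau$ the product converges to $P_{1,\dots,n}$ at an exponential rate.

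The step I expect to be the real obstacle — and the one genuinely new compared with Theorem \ref{thm quasi-periodic} — is handling the blocks that fail to be surjective: along a random $\tau$ there are (rare) arbitrarily long runs of such blocks, each of which can inflate the norm by a factor up to $M>1$, so exact surjectivity of every fixed-length window is lost. The resolution is quantitative and dictates the order of quantifiers above: one first pins down $M$ and $\rho$ (depending only on $\beta,n,\mu$), then chooses the per-good-block contraction $q$ small enough that $q^{\rho/2}$ beats $M^{1-\rho/2}$ — legitimate precisely because $q$ may be taken as small as we please at the cost of shrinking $\gamma$ — and only then invokes Corollary \ref{main corollary} to produce $\gamma$; one must finally check that $q$, hence $\gamma$, depends only on $\beta,n,\mu$. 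I would also remark that the block bookkeeping can be bypassed by applying Kingman's subadditive ergodic theorem to $a_i(\tau)=\log\|P_{\tau(i)}\cdots P_{\tau(1)}-P_{1,\dots,n}\|$, whose subadditivity $a_{i+j}(\tau)\le a_i(\mathrm{shift}^j\tau)+a_j(\tau)$ is exactly the factorization displayed above; it then suffices to verify $\mathbb{E}[a_n]\le\rho\log q+(1-\rho)\log M<0$, which yields a negative Lyapunov exponent and hence a.s.\ exponential decay.
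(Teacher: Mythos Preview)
Your proof is correct and follows essentially the same approach as the paper: cut the sequence into consecutive length-$n$ blocks, apply Corollary~\ref{main corollary} to the surjective ones, and use the strong law of large numbers to guarantee a positive density of such blocks so that their contraction outweighs the bounded expansion on the remaining blocks. The only differences are cosmetic --- the paper inserts the factor $(I-P_{1,\dots,n})$ only at the good blocks (so that bad blocks are bounded directly by $\beta^n$ rather than your $M=\beta^n+B_0$), and incidentally your formula $\rho = n!\prod_j\mu(j)$ corrects a typo in the paper's proof.
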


\begin{proof}
Fix $\beta$ and $\mu$ as above. For $k \in \mathbb{N}, \tau \in \lbrace 1,...,n \rbrace^{\mathbb{N}}$, define the sets 
$$A (\tau,k) = \left\lbrace 1 \leq l \leq k : \lbrace \tau(n(l-1)+1), \tau(n(l-1)+2),...,\tau (ln) \rbrace = \lbrace 1,...,n\rbrace \right\rbrace,$$
$$B(\tau,k) = \lbrace 1,...,k \rbrace \setminus A (\tau,k),$$
$$A (\tau ) = \bigcup_{k} A(\tau,k).$$

In a random $n$-tuple of numbers chosen according to the probability $\mu \times \mu \times ... \times \mu$, the probability that each number $1,...,n$ will appear in the $n$-tuple exactly once is $\frac{1}{n! \mu (1) \mu (2) ... \mu (n)}$. Therefore, by the law of large numbers, for $\overline{\mu}$-almost every $\tau \in \lbrace 1,...,n \rbrace^{\mathbb{N}}$ we have that
$$\lim_{k \rightarrow \infty} \dfrac{\vert A(\tau,k) \vert}{k} = \dfrac{1}{n! \mu (1) \mu (2) ... \mu (n)}.$$

Fix $0<\lambda < \frac{1}{n! \mu (1) \mu (2) ... \mu (n)}$. Then, as a result of the law of large numbers mentioned above, for $\overline{\mu}$-almost every $\tau \in \lbrace 1,...,n \rbrace^{\mathbb{N}}$ there is $k_\tau \in \mathbb{N}$ such that for every $k \geq k_\tau$ 
we have that $\frac{\vert A(\tau,k) \vert}{k} \geq \lambda,$ or equivalently, for every $k \geq k_\tau$ we have that $\frac{\vert B(\tau,k) \vert}{k} \leq 1-\lambda$.

Fix $q >0$ such that $\beta^{n(1-\lambda)} q^\lambda <1$ and let $\gamma = \gamma (\beta, n,q)$ given by Corollary \ref{main corollary}. Then by Corollary \ref{main corollary} above, if 
$$\max \lbrace  \cos(\angle (P_{j_1},P_{j_2})) : 1 \leq j_1 < j_2 \leq n \rbrace \leq \gamma,$$
then for every $l \in A (\tau,k)$ we have that 
$$\Vert P_{\tau (ln)} ... P_{\tau ((l-1)n+1)} (I- P_{1,...,n}) \Vert = \Vert P_{\tau (ln)} ... P_{\tau ((l-1)n+1)} - P_{1,...,n} \Vert \leq q.$$

As in the previous proof, from the assumption of weak consistency, we have that for every $i_1 < i_2$ and every $\tau$ the following holds
$$P_{\tau (i_2)} ... P_{\tau (i_1)} (I - P_{1,...,n}) = (I - P_{1,...,n}) P_{\tau (i_2)} ... P_{\tau (i_1)} (I - P_{1,...,n}).$$
For $l \in \mathbb{N}$, define operators $L_l = L_l (\tau)$ as follows
$$L_{l} = \begin{cases}
P_{\tau (ln)} ... P_{\tau ((l-1)n+1)} (I- P_{1,...,n}) & l \in A(\tau) \\
P_{\tau (ln)} ... P_{\tau ((l-1)n+1)} & l \notin A(\tau)
\end{cases}.$$
Then for every $i$, we have (as a result of weak consistency) that 
$$P_{\tau (i)} ... P_{\tau (1)}(I - P_{1,...,n}) = P_{\tau (i)} ... P_{\tau (\lfloor \frac{i}{n} \rfloor n)+1} \left( \prod_{l=1}^{\lfloor \frac{i}{n} \rfloor} L_l \right) (I-P_{1,...,n}).$$
Note that 
$$\forall l \in A(\tau), \Vert L_l \Vert \leq q,$$
$$\forall l \notin A (\tau), \Vert L_l \Vert \leq \beta^n.$$
Therefore for every $i$, we have that
\begin{dmath*}
\Vert P_{\tau (i)} ... P_{\tau (1)}(I - P_{1,...,n}) \Vert \leq \\
\Vert P_{\tau (i)} ... P_{\tau (\lfloor \frac{i}{n} \rfloor n + 1)} \Vert \left( \prod_{l=1}^{\lfloor \frac{i}{n} \rfloor} \Vert L_l \Vert \right) \Vert I-P_{1,...,n} \Vert \leq \\ \beta^{n-1} (\beta^{n})^{\vert B (\tau, \lfloor \frac{i}{n} \rfloor) \vert} q^{\vert A (\tau, \lfloor \frac{i}{n} \rfloor) \vert} \Vert I - P_{1,...,n} \Vert. 
\end{dmath*} 
As a result, by the definition of $k_\tau$, for every $i \geq n k_\tau$ the following holds 
$$\Vert P_{\tau (i)} ... P_{\tau (1)}(I - P_{1,...,n}) \Vert \leq \Vert I - P_{1,...,n} \Vert \beta^{n-1} (\beta^{n(1-\lambda)} q^\lambda)^{\lfloor \frac{i}{n} \rfloor}.$$
Recall that $q$ was chosen to ensure that $\beta^{n(1-\lambda)} q^\lambda <1$ and therefore the right-hand side of the above inequality tends to $0$ (exponentially fast) as $i$ tends to infinity.
\end{proof}

\bibliographystyle{plain}
\bibliography{bibl}

\end{document}